\newtheorem{theorem}{Theorem}[section]
\newtheorem{lemma}{Lemma}[section]
\newtheorem{proposition}[theorem]{Proposition}
\newtheorem{corollary}[theorem]{Corollary}
\newcommand{\R }{ \ensuremath{ \mathbf{R} }}
\newcommand{\K }{ \ensuremath{ \mathcal{K}_{\nabla} }}
\newcommand{\proof}{{\noindent \bf Proof:} }
\newcommand{\eop }{ \hfill $\Box$ }
\begin{document}

\begin{center}

\vspace{1cm}

 {\Large {\bf A note on stochastic calculus in vector bundles \\[2mm]
}}

\end{center}

\vspace{0.3cm}

\begin{center}
{\large { Pedro J. Catuogno}\footnote{E-mail: pedrojc@ime.unicamp.br. Research partially
supported by CNPq, grant no. 302704/ 2008-6, 480271/2009-7 and FAPESP, grant no. 07/06896-5} \ \ \ \ \ \ \ \ \ \ \ \ \  Diego
S. Ledesma\footnote{E-mail: dledesma@ime.unicamp.br. Research supported by FAPESP, grant no. 10/20347-7.}

\bigskip

{ Paulo R. Ruffino}\footnote{E-mail: ruffino@ime.unicamp.br. Research partially
supported by CNPq, grant no. 306264/ 2009-9, 480271/2009-7 and FAPESP, grant no. 07/06896-5.}}

\vspace{0.2cm}

\textit{Departamento de Matem\'{a}tica, Universidade Estadual de Campinas, \\
13.081-970 - Campinas, SP, Brazil.}

\end{center}

\begin{abstract}
The aim of these notes is to relate covariant stochastic  integration in a vector bundle $E$ (as in Norris \cite{Norris}) with the usual Stratonovich calculus via the connector $\K:TE \rightarrow E$ (cf. e.g. Paterson \cite{Paterson} or Poor
\cite{Poor}) which carries the connection dependence. 

\end{abstract}

\vspace{1cm}

\noindent \textbf{Key words and phrases:} Vector bundles, global analysis, stochastic calculus.

\noindent \textbf{AMS 2010 subject classification:} 58J65 (60J60, 60H05).

\smallskip



\section{Introduction}

Stochastic calculus on vector bundles has been studied by several authors, among others, Arnaudon and Thalmaier \cite{Arnaudon-Thalmaier}, Norris \cite{Norris}, Driver and Thalmaier \cite{Driver}. In these articles, the stochastic integral of a semimartingale $v_t$ in a vector bundle $\pi:E\rightarrow M$ is defined by decomposing $v_t$ into horizontal and vertical (covariant) components according to a given connection in $E$.
The aim of these notes is to relate covariant stochastic  integral in vector bundles (Norris \cite{Norris}) with the usual Stratonovich calculus using an appropriate operator, the connector $\K$ (cf. e.g. Paterson \cite{Paterson} and Poor \cite{Poor}), from the tangent space $TE$ to $E$ which carries the connection dependence.

\bigskip

We denote by $M$ a smooth differentiable manifold. Let $E$ be an $n$-dimensional vector bundle over $M$ endowed with a connection $\nabla$. This connection induces a natural projection
$\mathcal{K}_{\nabla}: TE \rightarrow
E$ called the associated connector (cf. Paterson \cite{Paterson} and Poor
\cite{Poor})  which projects into the vertical subspace of $TE$ identified with $E$. More precisely: Given a differentible curve $v_t \in E$, decompose $v_t=u_t f_t$, where $u_t$ is the unique horizontal lift of  $\pi(v_t)$ in the principal bundle $Gl(E)$ of frames in $E$ starting at a certain $u_0$ with $\pi (u_0)= \pi(v_0)$ and $f_t \in \R^n$. 
Then 
\[
\mathcal{K}_{\nabla} (v'_0) := u_0 f'_0.
\]
Norris \cite{Norris} defines 
the covariant Stratonovich integration of a section $\theta$ in the dual vector bundle $E^*$
 along a process $v_t \in
E$ by:
\[
 \int \theta Dv_t := 
 \int \theta
 u_t \ \circ d f_ t, 
\]
where $v_t = u_t f_t$; and the corresponding covariant It\^o version:
\[
 \int \theta D^I v_t :=  \int \theta
 u_t \ d f_ t.
\]

\section{Main results}

Initially, observe that using the connector
$\K$, the covariant integral above reduces to a classical Stratonovich integral of 1-forms:

\begin{proposition}  Let $v_t$ be a semimartingale in $E$ and $\theta \in \Gamma (E^*)$. Then 
 \[
   \int \theta ~  Dv_t =  \int \theta ~   \K \ \circ d v_t.
 \]
\end{proposition}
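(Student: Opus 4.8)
The right-hand side is to be read as the ordinary (geometric) Stratonovich integral along the $E$-valued semimartingale $v_t$ of the $1$-form $\theta\,\K$ on the total space $E$, namely the form whose value at $e\in E$ sends $w\in T_eE$ to $\theta_{\pi(e)}(\K w)$. The plan is to reduce the claimed identity to the defining property of the connector by means of the Stratonovich chain rule. Write $\mu:Gl(E)\times\R^n\to E$ for the evaluation map $\mu(u,f)=uf$; since $u_t$ is the horizontal lift of $\pi(v_t)$ (hence a semimartingale solving the horizontal-lift equation) and $f_t$ is a semimartingale in $\R^n$, the pair $(u_t,f_t)$ is a semimartingale in $Gl(E)\times\R^n$ with $v_t=\mu(u_t,f_t)$. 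By the chain rule for Stratonovich integrals of $1$-forms under smooth maps,
\[
\int \theta\,\K\circ dv_t=\int \mu^{*}(\theta\,\K)\circ d(u_t,f_t).
\]

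Next I would compute the pullback. For $(\xi,\eta)\in T_uGl(E)\times\R^n$, linearity of $d\mu$, of $\K$ and of $\theta$ gives
\[
\mu^{*}(\theta\,\K)_{(u,f)}(\xi,\eta)=\theta\bigl(\K\,d\mu(\xi,0)\bigr)+\theta\bigl(\K\,d\mu(0,\eta)\bigr).
\]
The second summand is the $\R^n$-part: $d\mu(0,\eta)$ is the vertical vector $u\eta$, and $\K$ restricted to the vertical subspace is the canonical identification with the fibre, so this term equals $\theta(u\eta)$. Feeding in the Stratonovich differential of $f_t$ therefore reproduces exactly Norris's integral $\int\theta\,u_t\circ df_t=\int\theta\,Dv_t$.

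It remains to show that the first summand contributes nothing. Here the essential geometric fact is that $\K$ annihilates horizontal vectors: for fixed $f$, the curve $s\mapsto u_sf$ obtained from a horizontal curve $u_s$ in $Gl(E)$ is the parallel transport of $uf$ along $\pi(u_s)$, hence a horizontal curve in $E$, so $d\mu(\xi,0)\in\ker\K$ whenever $\xi$ is horizontal. Since $u_t$ is a horizontal lift, its Stratonovich differential $\circ du_t$ lies in the horizontal distribution of $Gl(E)$; consequently the $1$-form $\xi\mapsto\theta(\K\,d\mu(\xi,0))$ vanishes on every increment of $u_t$, and its Stratonovich integral is zero. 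Combining the two computations yields $\int\theta\,\K\circ dv_t=\int\theta\,Dv_t$.

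I expect the delicate point to be precisely the vanishing of this horizontal term: it rests on the identification of $\circ du_t$ with an element of the horizontal distribution (which is the sense in which $u_t$ is a \emph{horizontal} lift at the level of Stratonovich differentials) together with the fact that $\mu$ carries horizontal vectors of the frame bundle to parallel-transport velocities in $E$, that is, into $\ker\K$. The invocation of the Stratonovich chain rule for the non-injective map $\mu$ is standard but should be cited, and the decomposition of the pullback integral into its $Gl(E)$- and $\R^n$-parts uses only the linearity of the Stratonovich integral in the integrand $1$-form.
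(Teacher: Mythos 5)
Your proof is correct and follows essentially the same route as the paper: the paper likewise introduces the action map $\phi(u,f)=uf$, pulls $\theta\,\K$ back along the pair $(u_t,f_t)$, splits the Stratonovich integral into its $\R^n$- and $Gl(E)$-parts, identifies the $\R^n$-part with Norris's integral via $\phi_{u_t}^*\K(z)=u_t z$, and discards the $Gl(E)$-part. You are in fact slightly more careful than the paper at the one delicate point: the paper asserts $\phi_{f_t}^*\,\theta\,\K=0$ outright, whereas this pullback vanishes only on horizontal vectors (it is nonzero on vectors vertical for $Gl(E)\to M$), and your argument correctly supplies the missing ingredient that the integral nevertheless vanishes because $\circ\, du_t$ lies in the horizontal distribution, $u_t$ being a horizontal lift.
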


\proof Let $\phi: Gl(E) \times \mathbf{R}^n \rightarrow E$ be the action map $\phi(u,f)= uf$. The right hand
 side in the equation above is
\begin{eqnarray}
 \int \theta ~  \K  \,  \circ d \phi (u_t, f_t) &=
 \int \phi_{u_t}^* \theta   \K\,  \circ d f_t + 
\int \phi_{f_t}^* \theta ~   \K  \, \circ d u_t.
\end{eqnarray}
 The second term on the right hand side vanishes since $\phi_{f_t}^* \theta ~   \K = 0$. The formula holds because  $ \phi_{u_t}^*  \K\, (z) = u_t z $ for all $z \in \R^n$.


 \eop

\noindent \textbf{Remark:} In the special case of $E=TM$, one can compare the classical integration of 1-forms with the covariant integration:  Let $Y_t$ be a semimartingale in $M$ and $v_t$ be a semimartingale in $E$.  If $v_t =u_t f_t$ such that $u_t$ is a horizontal lift of $Y_t$ and $f_t$ is the antidevelopment of $Y_t$, then for any 1-form $\theta$, the classical integration in $M$ and the covariant integration in $E$ coincides: 
\[
 \int \theta \, \circ dY_t = \int \theta \K \, \circ dv_t.
\]

\subsubsection*{Local coordinates:} Let $\{\delta_1, \ldots \delta_n \}$ be local sections in $E$ which is a basis in a coordinate neighbourhood $(U, \varphi = (x^1, \ldots , x^d))$, where $d$ is the dimension of $M$. For $1\leq \alpha, \beta \leq n $ and $1\leq i \leq d $, we write 
\[
 \nabla_{\frac{\partial}{\partial x^i}} \delta_{\alpha} = \Gamma_{i\alpha}^{\beta} \delta_{\beta},
\]
then
\[
 \K \left( \frac{\partial \delta_{\alpha}}{\partial x^i} \right) = \Gamma_{i\alpha}^{\beta} \delta_{\beta}.
\]
Let $\gamma_t$ be a differentiable curve in $M$ and  $u_t$ be a horizontal lift of $\gamma_t$ in $Gl(E)$, we write $u_t^{\beta}= u_t (e_ {\beta}) = u_t ^{\beta \alpha} \delta_{\alpha} (\gamma_t)$. Naturally 
\[
 \nabla _{\gamma'_t} u^{\beta}_t =0,
\]
and the parallel transport equation is given by
\[
 \frac{ u_t ^{\alpha \beta}}{dt} + \frac{\gamma^j}{dt} u_t ^{\alpha \gamma} \Gamma_{j \gamma}^{\beta} (\gamma_t) =0.
\]
For $\theta \in \Gamma(E^*)$, write $\theta = \theta^{\alpha} \delta^*_{\alpha}$, where $\theta ^\alpha = \theta (\delta_\alpha)$. We have, for each $1 \leq \alpha \leq n$
\[
  \left( \nabla_{\frac{\partial}{\partial x^j}} \theta \right) \delta^{\alpha} = \frac{\partial \theta^\alpha}{\partial x^j} - \theta (\nabla_{\frac{\partial}{\partial x^j}} \delta^{\alpha}) = \frac{\partial \theta^\alpha}{\partial x^j}- \Gamma_{j\alpha}^{\beta} \theta^\beta . 
\]
That is,
\[
 \nabla \theta = (\frac{\partial \theta^\alpha}{\partial x^j}- \Gamma_{j\alpha}^{\beta} \theta^\beta)\  dx^j \otimes \delta_\alpha^*.
\]

\subsubsection*{Cross quadratic variation in sections of  $TM^* \otimes E^*$:}

In order to find a covariant convertion formula for It\^o-Stratonovich integrals we introduce stochastic integration formulae for sections of  $TM^* \otimes E^*$, which is the space where the covariant derivative $\nabla \theta$ lives. Let  $v_t$ be a semimartingale  in $E$. Denoting  $x_t = \pi(v_t)$, we have the following identities: 
\begin{description}
\item[1)] For  $\alpha \in \Gamma (TM^* )$ and $\theta\in \Gamma (E^*)$,
\[
\int \alpha \otimes \theta \  (d  x_t, Dv_t) = \left<  \int \alpha \circ d \pi (v_t), \int \theta Dv_t\right>.
\]
\item[2)] For $b \in \Gamma (TM^* \otimes E^*)$ and $f\in C^{\infty} (M)$, 
\[
\int f b\  (d  x_t, Dv_t) = \int f(\pi (v_t)) \circ d \int b \  (d  x_t, Dv_t).
\]
\end{description}
This is well defined (similarly to  Emery \cite[p. 23]{Emery}). In particular, for $b=\nabla \theta$, in local coordinates:
\begin{eqnarray*}
\int \nabla \theta \  (d  x_t, Dv_t) &=& \int  (\frac{\partial \theta^\alpha}{\partial x^j}- \Gamma_{j\alpha}^{\beta} \theta^\beta)\ \circ d\int dx^j \otimes \delta_\alpha^*\  (d x_t, Dv_t) \\
 & = & \int  (\frac{\partial \theta^\alpha}{\partial x^j}- \Gamma_{j\alpha}^{\beta} \theta^\beta)(x_t) \ \circ d < x^j_t, \int u^{\gamma \alpha} d f^{\gamma} >_t \\
& = &   \int  (\frac{\partial \theta^\alpha}{\partial x^j}- \Gamma_{j\alpha}^{\beta} \theta^\beta) (x_t) u_t^{\gamma \alpha} \, \circ d <x^j, f^{\gamma} >_t.
\end{eqnarray*}

We have the following It\^o-Stratonovich covariant convertion formula:

\begin{proposition} Let $v_t$ be a semimartingale in $E$ and $\theta \in \Gamma (E^*)$. Then 

\begin{equation} \label{convertion}
 \int \theta  \ Dv_t = \int \theta D^I v_t + \frac{1}{2} \int \nabla \theta \  (d x_t, Dv_t).
\end{equation}
\end{proposition}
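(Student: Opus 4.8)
The plan is to reduce the identity to the classical It\^o--Stratonovich conversion applied to the real-valued integral that defines the covariant integral. By definition one has $\int\theta\,Dv_t=\int\theta u_t\circ df_t$ and $\int\theta D^Iv_t=\int\theta u_t\,df_t$, where $\theta u_t$ denotes the row-vector-valued semimartingale with components $(\theta u_t)_\gamma=\theta(u_te_\gamma)$. The ordinary conversion formula then gives
\[
\int\theta u_t\circ df_t=\int\theta u_t\,df_t+\frac12\,\langle\theta u,f\rangle,
\]
with $\langle\theta u,f\rangle:=\sum_\gamma\langle(\theta u)_\gamma,f^\gamma\rangle$ the cross quadratic variation. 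Since the first term on the right is exactly $\int\theta D^Iv_t$, the whole proposition amounts to showing that $\langle\theta u,f\rangle$ equals $\int\nabla\theta\,(dx_t,Dv_t)$.

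To evaluate the bracket I would pass to the local frame $\{\delta_1,\dots,\delta_n\}$, where $(\theta u_t)_\gamma=u_t^{\gamma\alpha}\,\theta^\alpha(x_t)$ with $\theta^\alpha=\theta(\delta_\alpha)$ and $x_t=\pi(v_t)$. The cross quadratic variation depends only on the local-martingale parts of its arguments and is insensitive to the It\^o/Stratonovich distinction, so it suffices to extract the martingale part of the product $u_t^{\gamma\alpha}\theta^\alpha(x_t)$. Differentiating, the factor $\theta^\alpha(x_t)$ contributes $\frac{\partial\theta^\alpha}{\partial x^j}(x_t)\,dx^j_t$ at the martingale level, while the factor $u_t^{\gamma\alpha}$ contributes through the parallel transport equation recalled above, which at the martingale level reads $-u_t^{\gamma\sigma}\,\Gamma^\alpha_{j\sigma}(x_t)\,dx^j_t$.

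Combining the two contributions and relabelling the summation indices, the martingale part of $(\theta u_t)_\gamma$ becomes $u_t^{\gamma\alpha}\big(\frac{\partial\theta^\alpha}{\partial x^j}-\Gamma^\beta_{j\alpha}\theta^\beta\big)(x_t)\,dx^j_t$, so that
\[
\langle\theta u,f\rangle=\int u_t^{\gamma\alpha}\Big(\frac{\partial\theta^\alpha}{\partial x^j}-\Gamma^\beta_{j\alpha}\theta^\beta\Big)(x_t)\,d\langle x^j,f^\gamma\rangle_t.
\]
This is precisely the local expression already obtained for $\int\nabla\theta\,(dx_t,Dv_t)$, bearing in mind that the Stratonovich differential against the finite-variation bracket $\langle x^j,f^\gamma\rangle$ coincides with the It\^o one. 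I expect the main obstacle to be the careful bookkeeping of the parallel transport term: one must check that the Christoffel symbols produced by differentiating $u_t^{\gamma\alpha}$ reassemble, after the index relabelling, into exactly the Christoffel part of $\nabla\theta$ displayed in the local-coordinate computation, and confirm that the drift terms of both $x_t$ and $u_t$ drop out of the bracket.
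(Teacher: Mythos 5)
Your proposal is correct and takes essentially the same route as the paper's proof: both apply the classical It\^o--Stratonovich conversion in a local frame and then identify the cross quadratic variation $\langle \theta(u_t e_\gamma), f^\gamma\rangle$ with $\int \nabla\theta\,(dx_t, Dv_t)$ by combining the chain rule for $\theta^\alpha(x_t)$ with the parallel transport equation for $u_t^{\gamma\alpha}$ and relabelling indices. Your phrasing in terms of extracting martingale parts is just the paper's product rule for quadratic covariation ($\langle XY, Z\rangle = \int X\, d\langle Y,Z\rangle + \int Y\, d\langle X,Z\rangle$) expressed in different words.
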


\proof In local coordinates we have that 
\[
\int  \theta \ Dv_t = \int \theta_{x_t} (u_t e_{\alpha}) \ \circ df_t^{\alpha} = \int \theta_{x_t} (u_t e_{\alpha}) \  df_t^{\alpha} + \frac{1}{2} <\theta (u e_{\alpha}), f^{\alpha}>.
\]
We have to show that 
\[
 \int \nabla \theta \  (d x_t, Dv_t) = <\theta (u e_{\alpha}), f^{\alpha}>.
\]
But
\begin{eqnarray*}
< \theta (u e_{\alpha}), f^{\alpha}> &=& < \theta_ x^{\beta} \delta^*_{\beta} (u e_{\alpha}), f^{\alpha}>\\
 & =& < \theta_ x^{\beta} (u e_{\alpha})^{\beta}, f^{\alpha}> \\
&=& \int   (ue_ {\alpha})^{\beta} \ d < \theta_ x^{\beta}, f^{\alpha}> + \int \theta ^{\beta}_ x \ d <(ue_ {\alpha})^{\beta}, f^{\alpha}> \\
&=& \int   u^{\alpha \beta} \ d < \theta_ x^{\beta}, f^{\alpha}> + \int \theta ^{\beta}_ x \ d <u^{\alpha \beta}, f^{\alpha}> \\
& =& \int   u^{\alpha \beta} \frac{\partial \theta^{\beta}}{\partial x^j}\ d < x^{j}, f^{\alpha}> + \int \theta ^{\beta}_ x \ d <u^{\alpha \beta}, f^{\alpha}> \\
& =& \int   u^{\alpha \beta} \frac{\partial \theta^{\beta}}{\partial x^j}\ d < x^{j}, f^{\alpha}> - \int \theta ^{\beta}_ x u^{\alpha \gamma} \Gamma_{j\gamma}^{\beta} (x) <x^j, f^{\alpha}> \\
& =& \int  \left( u^{\alpha \beta} \frac{\partial \theta^{\beta}}{\partial x^j} - \theta ^{\beta}_ x u^{\alpha \gamma} \Gamma_{j\gamma}^{\beta} (x) \right) <x^j, f^{\alpha}> \\
& =& \int  \left( u^{\alpha \gamma} \frac{\partial \theta^{\gamma}}{\partial x^j} - \theta ^{\beta}_ x u^{\alpha \gamma} \Gamma_{j\gamma}^{\beta} (x) \right) <x^j, f^{\alpha}> \\
& =& \int  \left(  \frac{\partial \theta^{\gamma}}{\partial x^j} - \theta ^{\beta}_ x \Gamma_{j\gamma}^{\beta} (x) \right) u^{\alpha \gamma} <x^j, f^{\alpha}> \\
&=& \int \nabla \theta \ (dx_t, Dv_t).
\end{eqnarray*}
  
\eop

\subsubsection*{It\^o representation:}
The vertical lift of an element $w \in E$ to the tangent space $T_eE $, with $e$ and $w$ in the same fiber is given by  
\begin{equation} \label{verticalcomponent}
w^v  =\frac{d}{dt} [e+tw]_{t=0} \in T_eE.
\end{equation}
Let $r, s$ be sections of $E$ and $X, Y$ be vector fields of $M$. We shall consider a connection  $\nabla^h$  in $E$, a prolongation of $\nabla$, which satisfies the following:
\[
\begin{array}{lcl}
\nabla^h_{r^v}s^v=0, & & \nabla^h_{X^h}s^v=(\nabla_X s)^v, \\

 & & \\
 \nabla^h_{r^v}Y^h=0, &   & \nabla_{X^h} Y^h \mbox{ is horizontal.}
\end{array}
\]

\bigskip

\noindent \textbf{Remark:} An example of this connections is the horizontal connection defined by Arnaudon and Thalmaier \cite{Arnaudon-Thalmaier}, where, considering a connection $\tilde{\nabla}$ in $M$, the extra condition $\nabla^h_{X^h}Y^h = (\tilde{\nabla}_X Y)^h $ characterizes this connection.

\bigskip

Next proposition shows a  geometrical characterization of the covariant It\^o integral.

\begin{proposition} Let $v_t$ be a semimartingale in $E$ and $\theta \in \Gamma (E^*)$. Then 
\begin{equation} \nonumber  
 \int \theta\ D^I v_t = \int \theta\,  \K\ d^{\nabla^h} v_t. 
\end{equation}
 
\end{proposition}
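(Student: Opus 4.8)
The plan is to deduce the statement from the manifold It\^o--Stratonovich conversion formula for the $1$-form $\theta\,\K$ on the manifold $E$, equipped with the connection $\nabla^h$, combined with the two previous propositions. Viewing $\theta\,\K \in \Gamma(T^*E)$ and $v_t$ as an $E$-valued semimartingale, the classical conversion formula for the It\^o integral associated with $\nabla^h$ reads
\[
\int \theta\,\K \circ dv_t = \int \theta\,\K\ d^{\nabla^h} v_t + \frac{1}{2}\int \nabla^h(\theta\,\K)\,(dv_t, dv_t).
\]
By Proposition~1 the left-hand side equals $\int \theta\,Dv_t$, and by the convertion formula (\ref{convertion}) this in turn equals $\int \theta\,D^I v_t + \frac{1}{2}\int \nabla\theta\,(dx_t, Dv_t)$. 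Substituting, the desired identity $\int\theta\,D^I v_t = \int\theta\,\K\ d^{\nabla^h}v_t$ becomes equivalent to the tensorial identity
\[
\int \nabla\theta\,(dx_t, Dv_t) = \int \nabla^h(\theta\,\K)\,(dv_t, dv_t),
\]
which is what I would establish.

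To prove this identity I would compute the $(0,2)$-tensor $\nabla^h(\theta\,\K)$ on $E$ by testing it against horizontal and vertical vector fields, using the four defining relations of $\nabla^h$. First note that $(\theta\,\K)(s^v) = (\theta(s))\circ\pi$ and $(\theta\,\K)(X^h)=0$, since $\K$ annihilates horizontal vectors and restricts to the canonical identification on vertical ones. Using $(\nabla^h\omega)(A,B) = A(\omega(B)) - \omega(\nabla^h_A B)$ together with $\nabla^h_{X^h}Y^h$ horizontal, $\nabla^h_{r^v}s^v=0$, $\nabla^h_{r^v}Y^h=0$ and $\nabla^h_{X^h}s^v = (\nabla_X s)^v$, I expect to find that $\nabla^h(\theta\,\K)$ vanishes on the pairs $(X^h,Y^h)$, $(r^v,s^v)$ and $(s^v,X^h)$, while on the remaining block
\[
\nabla^h(\theta\,\K)\,(X^h, s^v) = \big((\nabla_X\theta)(s)\big)\circ\pi .
\]
Thus $\nabla^h(\theta\,\K)$ has exactly the off-diagonal (horizontal $\otimes$ vertical) block structure of $\nabla\theta \in \Gamma(T^*M\otimes E^*)$, lifted to $E$.

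It then remains to match this against the quadratic variation. Writing $dv_t$ in the frame $\{(\partial/\partial x^i)^h, (\delta_\alpha)^v\}$, its horizontal component projects onto $dx_t$ while its vertical component is precisely the covariant differential $Dv_t$; concretely $\eta^i(dv_t)=dx^i_t$ and $\omega^\alpha(dv_t)=(Dv_t)^\alpha$ for the dual coframe $\{\eta^i,\omega^\alpha\}$. Since $\nabla^h(\theta\,\K)$ only carries the horizontal-vertical block, evaluating the intrinsic quadratic-variation pairing in this frame gives
\[
\int \nabla^h(\theta\,\K)\,(dv_t,dv_t) = \int \big((\nabla_{\partial/\partial x^i}\theta)(\delta_\alpha)\big)(x_t)\ d\langle x^i, (Dv)^\alpha\rangle_t ,
\]
which is exactly the local expression computed earlier for $\int\nabla\theta\,(dx_t, Dv_t)$. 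This yields the required identity and hence the proposition.

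The main obstacle I anticipate is this last matching step: $\nabla^h(\theta\,\K)$ is not symmetric, so one must check that contracting it with the symmetric quadratic variation reproduces $\int\nabla\theta\,(dx_t,Dv_t)$ with the correct coefficient and with no spurious factor of $2$. The point is that the vanishing of the $(s^v,X^h)$ block means only the single horizontal-vertical ordering survives, matching the fact that $\nabla\theta$ pairs a base tangent vector with a fibre vector; care is also needed to justify evaluating the quadratic-variation pairing in the non-holonomic frame $\{(\partial/\partial x^i)^h,(\delta_\alpha)^v\}$ rather than in natural coordinates on $E$.
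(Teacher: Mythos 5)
Your proposal is correct and follows essentially the same route as the paper: both apply the classical It\^o--Stratonovich conversion formula for the 1-form $\theta\,\K$ on $(E,\nabla^h)$, compute the four horizontal/vertical components of $\nabla^h(\theta\,\K)$ from the defining relations of $\nabla^h$ (finding the single nonzero block $\nabla^h(\theta\,\K)(X^h,s^v)=((\nabla_X\theta)(s))\circ\pi$), identify the correction term with $\int\nabla\theta\,(dx_t,Dv_t)$, and conclude via Propositions 2.1 and 2.2. Your frame-based justification of the quadratic-variation matching (including the no-spurious-factor-of-2 check) is a more explicit version of the paper's one-line assertion that $\nabla^h\theta\,\K=\nabla\theta\,(\pi_*\times\K)$.
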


\proof We have to calculate each component of $\nabla^h \theta \,  \K$. Using that for $A, B$ vector field in $E$ we have that 
\[
 \nabla_A^h \theta \,  \K (B) = A (\theta \,  \K (B)) - \theta \,  \K (\nabla^h_A B),
\]
 we obtain the components
\[
\begin{array}{lcc}
\nabla^h_{r^v} \theta \,  \K (s^v)  =  0,  & ~ ~ &   
\nabla^h_{r^v} \theta \,  \K (Y^h)  =  0, \\
 & \\
\nabla^h_{X^h} \theta \,  \K (s^v)  =  \nabla_X  \theta (s) \circ \pi, & ~ ~ &
 \nabla^h_{X^h} \theta \,  \K (Y^h)  =  0.
\end{array}
\]
Hence, using It\^o- Stratonovich convertion formula for classical 1-form integration, see e.g. Catuogno and Stelmastchuk \cite{Catuogno-Stelmastchuk}:
\begin{eqnarray*}
 \int \theta\ Dv_t &=& \int \theta \,  \K \circ dv_t \\
                  & = & \int \theta \,  \K\ d^{\nabla^h} v_t + \frac{1}{2} \int \nabla^h \theta \,  \K (dv_t, dv_t).
\end{eqnarray*}
For the correction term, we have that:
\[
 \nabla^h \theta \,  \K = \nabla \theta \,  (\pi_* \times \K),
\]
in the sense that $\nabla^h_{\pi_*A} \theta \K (B) = \nabla \theta \,  (\pi_* \times \K) (A,B)$. But
\[
  \int \nabla^h \theta \,  \K (dv_t, dv_t) = \int \nabla \theta (dx_t, Dv_t).
\]
Combining with equation (\ref{convertion}), we have that
\[
  \int \theta\ D^I v_t = \int \theta \,  \K\ d^{\nabla^h} v_t.
\]

\eop

\subsubsection*{Vector bundle mappings}

Consider two vector bundles $\pi: E \rightarrow M$,  $\pi': E' \rightarrow M'$ and a differentiable fibre preserving mapping $F: E \rightarrow E'$ over a differentiable map $\widetilde{F}: M \rightarrow M'$, i.e.  $ \pi '\circ F = \widetilde{F} \circ \pi $.

Let $\K$ and $\K '$ be connectors in $E$ and in $E'$ respectively.
We define the vertical derivative (or derivative in the fibre)  of $F$ in the direction of $w$ by:
\[
 D^v F (e) (w) = \K'F_* (w^v),
\]
where the vertical component $w^v$ is given by Equation (\ref{verticalcomponent}). For $Z\in T_{\pi(e)}M$, the horizontal (or parallel) derivative is:
\[
 D^h F (e) (Z)= \K' F_* (Z^h).
\]
For a vector field  $X$ in $E$, we have that
\[
 X=(\pi_* X)^h+ \K(X),
\]
hence 
\begin{equation} \label{decomposicao}
\K'F_* (X) = D^vF( \K (X) ) + D^hF ( \pi_* (X)).
\end{equation}

The It\^o formula for the Stratonovich covariant integration includes an usual 1-form integration, compare with Norris \cite[Eq. (20)]{Norris}:

\begin{proposition} \label{Pullback Norris-Stratonovich}
Given a fibre preserving map $F$ as above,
 \begin{equation}   \label{formulaN-S}
  \int \theta DF (v_t) = \int (D^v F)^* \theta Dv_t + \int (D^hF)^* \theta \ \circ d(\pi v_t). 
 \end{equation}
\end{proposition}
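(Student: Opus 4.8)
The plan is to reduce the statement to two facts already available---Proposition 1, which rewrites the covariant Stratonovich integral as an ordinary Stratonovich integral through the connector, and the pointwise decomposition (\ref{decomposicao})---together with the functoriality of Stratonovich calculus under smooth maps. No new geometric input is needed; everything is a matter of transporting $\theta$ across $F$ with the correct splitting.

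First I would apply Proposition 1 to the semimartingale $F(v_t)$ in $E'$, obtaining
\[
\int \theta\, DF(v_t) = \int \theta\, \K'\, \circ dF(v_t).
\]
Since $t \mapsto F(v_t)$ is the image of $v_t$ under the smooth map $F$, the Stratonovich differential is functorial, $\circ\, dF(v_t) = F_*\, \circ dv_t$, so that for the $1$-form $\theta\,\K'$ on $E'$ one has $\int \theta\,\K'\,\circ dF(v_t) = \int F^*(\theta\,\K')\,\circ dv_t = \int \theta\,\K' F_*\,\circ dv_t$. Next I would insert the decomposition (\ref{decomposicao}), applied to the tangent-valued Stratonovich differential $\circ dv_t$:
\[
\K' F_*\,\circ dv_t = D^vF(\K\,\circ dv_t) + D^hF(\pi_*\,\circ dv_t).
\]
Pairing with $\theta$ and using the definitions of the pullbacks $(D^vF)^*\theta$ and $(D^hF)^*\theta$ splits the integral into $\int (D^vF)^*\theta\,\K\,\circ dv_t + \int (D^hF)^*\theta\,\pi_*\,\circ dv_t$.

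It then remains to identify the two terms. For the first, Proposition 1 applied now in $E$ (with integrand $(D^vF)^*\theta$) gives $\int (D^vF)^*\theta\,\K\,\circ dv_t = \int (D^vF)^*\theta\, Dv_t$. For the second, functoriality again yields $\pi_*\,\circ dv_t = \circ\, d(\pi v_t)$, and since $(D^hF)^*\theta$ is an ordinary $1$-form on $M$ evaluated along $x_t = \pi(v_t)$, this term is exactly the classical Stratonovich integral $\int (D^hF)^*\theta\,\circ d(\pi v_t)$. Assembling the two pieces produces (\ref{formulaN-S}).

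The step I expect to require the most care is the passage from the pointwise identity (\ref{decomposicao}) to an identity of Stratonovich differentials, together with the bookkeeping of base points: both $D^vF$ and $D^hF$ are evaluated at $e = v_t \in E$, so $(D^vF)^*\theta$ is a position-dependent section of $E^*$ along $v_t$ while $(D^hF)^*\theta$ is a $1$-form on $M$ along $x_t$. One must check that the fibrewise linearity of $\K$ and $\K'$ lets the connectors commute with the linear pushforwards $F_*$ and $\pi_*$ inside the Stratonovich differential, and that Proposition 1 remains valid for an $E$-dependent integrand along $v_t$; granting these, the remaining identifications follow at once from the definitions of $D^vF$ and $D^hF$.
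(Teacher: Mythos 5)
Your proof is correct and takes essentially the same route as the paper's: rewrite the covariant integral via Proposition 1 applied in $E'$, use functoriality of the Stratonovich differential to get $\int \theta\,\K' F_* \circ dv_t$, insert the decomposition (\ref{decomposicao}), and identify the two resulting terms by Proposition 1 in $E$ and functoriality of $\pi$. The paper's proof is exactly this chain compressed into three lines, so your version differs only in being more explicit (and in flagging the position-dependence of $(D^vF)^*\theta$ along $v_t$, a point the paper passes over silently).
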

 
\proof We just have to use the decomposition of Equation (\ref{decomposicao}). 
\begin{eqnarray*}
 \int \theta DF (v_t) &=& \int \theta\K'F_* \ \circ d v_t \\
 &=& \int \left( \theta D^vF \K  + \theta D^hF  \pi_*  \right) \circ dv_t \\
 &=& \int (D^v F)^* \theta Dv_t + \int (D^hF)^* \theta \ \circ d(\pi v_t).
\end{eqnarray*}

\eop

\begin{proposition} For a section $b'$ in $(TM')^* \otimes (E')^*$ and a fibre preserving map $F: E \rightarrow E '$ over $\widetilde{F}: M \rightarrow M'$
 we have that
\[
 \int b'(d \pi 'F (v_t), DF(v_t))= \int ( \widetilde{F}_* \otimes D^v F )^* b' (d \pi v_t, D v _t) +  \int ( \widetilde{F}_* \otimes D^h F )^* b' (d \pi v_t, d \pi v _t). 
\]
\end{proposition}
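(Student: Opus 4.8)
The plan is to reduce the identity to the decomposable case $b' = \alpha' \otimes \theta'$ with $\alpha' \in \Gamma(TM'^*)$ and $\theta' \in \Gamma(E'^*)$, and then chain together the two pullback results already established. Since both the cross quadratic variation integral (through properties 1) and 2) above) and the tensorial operations $(\widetilde{F}_* \otimes D^v F)^*$ and $(\widetilde{F}_* \otimes D^h F)^*$ are additive and $C^{\infty}(M')$-linear in $b'$, a general section can be written locally as a finite sum $b' = \sum_k f'_k \, \alpha'_k \otimes \theta'_k$, and property 2) absorbs the function coefficients (each $f'_k$ being pulled back along $\widetilde{F}$ and evaluated along $\pi v_t$, which is consistent since $\pi' F(v_t) = \widetilde{F}(\pi v_t)$). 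Thus it suffices to treat $b' = \alpha' \otimes \theta'$.

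For the decomposable case, I would first apply property 1) to the left-hand side, using the semimartingale $F(v_t)$ in $E'$ whose base process is $\pi' F(v_t) = \widetilde{F}(x_t)$:
\[
\int (\alpha' \otimes \theta')(d \pi' F(v_t), DF(v_t)) = \left\langle \int \alpha' \circ d\widetilde{F}(x_t), \ \int \theta' DF(v_t) \right\rangle .
\]
Then I would treat each slot separately. For the base factor the classical invariance of the Stratonovich $1$-form integral under the smooth map $\widetilde{F}$ gives $\int \alpha' \circ d\widetilde{F}(x_t) = \int \widetilde{F}^* \alpha' \circ dx_t$. For the fibre factor, Proposition \ref{Pullback Norris-Stratonovich} gives
\[
\int \theta' DF(v_t) = \int (D^v F)^* \theta' \, D v_t + \int (D^h F)^* \theta' \circ d(\pi v_t).
\]

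Since the quadratic covariation is bilinear, substituting and splitting the bracket produces two terms. The first, $\langle \int \widetilde{F}^* \alpha' \circ dx_t, \int (D^v F)^* \theta' \, Dv_t\rangle$, is recognized by reading property 1) backwards with $\alpha = \widetilde{F}^* \alpha' \in \Gamma(TM^*)$ and $\theta = (D^v F)^* \theta' \in \Gamma(E^*)$, giving $\int (\widetilde{F}^*\alpha') \otimes ((D^v F)^* \theta')(dx_t, Dv_t) = \int (\widetilde{F}_* \otimes D^v F)^* b' (d\pi v_t, Dv_t)$. The second, $\langle \int \widetilde{F}^* \alpha' \circ dx_t, \int (D^h F)^* \theta' \circ d(\pi v_t)\rangle$, is a covariation of two classical $1$-form integrals along $x_t$, hence equals the Emery-type cross variation $\int (\widetilde{F}^*\alpha') \otimes ((D^h F)^* \theta')(dx_t, dx_t) = \int (\widetilde{F}_* \otimes D^h F)^* b' (d\pi v_t, d\pi v_t)$. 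Summing the two yields the stated formula.

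The main obstacle I anticipate is the bookkeeping of the two distinct pullback tensors, and in particular verifying for decomposable $b'$ the tensorial identities $(\widetilde{F}_* \otimes D^v F)^*(\alpha' \otimes \theta') = (\widetilde{F}^*\alpha') \otimes ((D^v F)^* \theta')$ and $(\widetilde{F}_* \otimes D^h F)^*(\alpha' \otimes \theta') = (\widetilde{F}^*\alpha') \otimes ((D^h F)^* \theta')$. The delicate point is that in the second identity $(D^h F)^* \theta'$ is a genuine $1$-form on $M$, since $D^h F \colon TM \to E'$ so that $(D^h F)^* \colon E'^* \to TM^*$; consequently the second integral on the right is of the classical $TM^* \otimes TM^*$ type (both slots fed by $d\pi v_t$) rather than the covariant $TM^* \otimes E^*$ type, and one must make sure the correct variant of the defining property is invoked when recognizing each bracket.
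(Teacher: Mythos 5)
Your argument is correct, but it follows a genuinely different route from the paper's. The paper works directly at the level of bilinear forms on $TE$: it rewrites the left-hand side as $\int b'\,(\pi'_*\otimes \K')(F_*\otimes F_*)\,(dv_t,dv_t)$ and then applies the pointwise identity $(\pi'_*\otimes \K')(F_*\otimes F_*)=\widetilde{F}_*\pi_*\otimes(D^vF\,\K+D^hF\,\pi_*)$, which is immediate from $\pi'\circ F=\widetilde{F}\circ\pi$ together with the decomposition (\ref{decomposicao}); expanding the tensor product yields both terms in one stroke. You instead reduce to decomposable $b'=\alpha'\otimes\theta'$ (via additivity and function-linearity of the Emery-type integral), turn the left-hand side into a bracket by the defining property 1) applied to the semimartingale $F(v_t)$ in $E'$, and then chain together Proposition \ref{Pullback Norris-Stratonovich}, the invariance $\int\alpha'\circ d\widetilde{F}(x_t)=\int\widetilde{F}^*\alpha'\circ dx_t$, and bilinearity of the bracket, before reading property 1) (and its classical $TM^*\otimes TM^*$ analogue) backwards. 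Your route makes the logical dependence on the already-proved pullback formula explicit and uses nothing beyond the defining properties of the cross variation; the paper's route is shorter, avoids the localization/decomposition step, and treats both terms uniformly. One caveat applies to both arguments, and you only partially flag it: unless $F$ is linear on fibres, $(D^vF)^*\theta'$ and $(D^hF)^*\theta'$ are not genuine sections of $E^*$ and $TM^*$ over $M$ but depend on the point $e\in E$, so ``reading property 1) backwards'' invokes the natural extension of that property to integrands evaluated along $v_t$ --- the same convention the paper itself uses in Proposition \ref{Pullback Norris-Stratonovich} and in its proof here. Incidentally, your conclusion also corrects a typo in the paper's final display, where the first term is misprinted with $D^hF$ in place of $D^vF$.
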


\proof We have 

\begin{eqnarray*}
  \int b'(d \pi 'F (v_t), DF(v_t)) & =& \int b' \,  (\pi'_*\otimes \K') (d F (v_t), d F (v_t)) \\
 &=& \int b' \,  (\pi'_* \otimes \K')\,  (F_* \otimes F_*) (d v_t, d  v_t). \\
\end{eqnarray*}
Using that 
\[ (\pi'_* \otimes \K')\,  (F_* \otimes F_*) = \widetilde{F}_* \,  \pi_* \otimes (
D^vF \K  + D^hF  \pi_* ) 
\]
 yields 
\[
 \int b'(d \pi 'F (v_t), DF(v_t))= \int ( \widetilde{F}_* \otimes D^h F )^* b' (d \pi v_t, D v _t) +  \int ( \widetilde{F}_* \otimes D^h F )^* b' (d \pi v_t, d \pi v _t).
\]

\eop

It\^o version of Formula (\ref{formulaN-S}) is given by:

\begin{proposition} Given a fibre preserving map $F$ as above,
 \begin{eqnarray*}
  \int \theta D^I F(v_t)& =& \int (D^v F)^* \theta D^I v_t + \int (D^hF)^* \theta \circ d\pi v_t + \\
 && \\
 & & \frac{1}{2} \int \left( \nabla (D^v F ^* \theta)- (\widetilde{F}_* \otimes
D^VF)^* \nabla' \theta \right) (\ d \pi v_t, D v_t )+\\
 && \\
 & & \frac{1}{2} \int \left(  \widetilde{F}_* \otimes D^h F \right)^* \nabla'\theta (d\pi v_t, d\pi v_t). 
 \end{eqnarray*}
\end{proposition}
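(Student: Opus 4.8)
The plan is to reduce the It\^o pullback formula to the already-established Stratonovich pullback formula (\ref{formulaN-S}) by systematically passing between It\^o and Stratonovich integrals with the covariant conversion formula (\ref{convertion}). The guiding observations are that, since $F$ is fibre preserving, $\pi'(F(v_t)) = \widetilde{F}(\pi(v_t))$, so the base process of $F(v_t)$ in $E'$ is $\widetilde{F}(x_t)$ with $x_t = \pi(v_t)$, and that $dx_t = d\pi v_t$. I would first apply the conversion formula (\ref{convertion}) to the semimartingale $F(v_t)$ in $E'$, relative to $\nabla'$, to write $\int \theta\, D^I F(v_t) = \int \theta\, DF(v_t) - \frac{1}{2}\int \nabla'\theta\,(d\pi' F(v_t), DF(v_t))$. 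This isolates a genuine Stratonovich covariant integral and a cross-quadratic-variation correction in $(TM')^* \otimes (E')^*$, which are precisely the two objects for which pullback formulas have already been proved.

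Next I would feed Proposition \ref{Pullback Norris-Stratonovich} into the first term, giving $\int \theta\, DF(v_t) = \int (D^vF)^*\theta\, Dv_t + \int (D^hF)^*\theta \circ d(\pi v_t)$, and feed the preceding proposition (with $b' = \nabla'\theta$) into the correction term, giving $\int \nabla'\theta(d\pi'F(v_t), DF(v_t)) = \int (\widetilde{F}_* \otimes D^vF)^*\nabla'\theta(d\pi v_t, Dv_t) + \int (\widetilde{F}_* \otimes D^hF)^*\nabla'\theta(d\pi v_t, d\pi v_t)$. At this point only one genuine covariant Stratonovich integral remains, namely $\int (D^vF)^*\theta\, Dv_t$; I would convert it back to its It\^o form by applying (\ref{convertion}) in $E$ to the section $(D^vF)^*\theta$, which produces $\int (D^vF)^*\theta\, D^I v_t$ together with the correction $\frac{1}{2}\int \nabla((D^vF)^*\theta)(dx_t, Dv_t)$. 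The classical $1$-form integral $\int (D^hF)^*\theta \circ d(\pi v_t)$ is deliberately left in Stratonovich form, matching the statement.

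Finally I would collect terms, using $dx_t = d\pi v_t$ to merge the two corrections of type $(d\pi v_t, Dv_t)$ into $\frac{1}{2}\int(\nabla((D^vF)^*\theta) - (\widetilde{F}_*\otimes D^vF)^*\nabla'\theta)(d\pi v_t, Dv_t)$, while the pure base term $(\widetilde{F}_*\otimes D^hF)^*\nabla'\theta(d\pi v_t, d\pi v_t)$ survives alone. The main obstacle is the careful bookkeeping of these three correction terms: keeping straight which conversion is taken in $E$ and which in $E'$, remembering that the vertical term must be converted while the base $1$-form term must not, and tracking the sign that each $\frac{1}{2}$ factor inherits from (\ref{convertion}). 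In particular, the sign attached to the pure base quadratic-variation term $\frac{1}{2}\int(\widetilde{F}_*\otimes D^hF)^*\nabla'\theta(d\pi v_t, d\pi v_t)$ should be verified against the $-\frac{1}{2}$ it acquires at the first conversion step, since this is the one place where the various half-factors do not obviously recombine.
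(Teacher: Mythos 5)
Your proposal is correct and follows exactly the paper's own proof: convert in $E'$ via (\ref{convertion}) applied to $F(v_t)$, expand the resulting Stratonovich term with Proposition \ref{Pullback Norris-Stratonovich} and the correction term with the quadratic-variation pullback proposition (taking $b' = \nabla'\theta$), then convert $\int (D^vF)^*\theta\, Dv_t$ back to It\^o form in $E$ and collect terms. Your closing caution about the sign is well founded: carrying out this combination faithfully produces $-\frac{1}{2}\int (\widetilde{F}_*\otimes D^hF)^*\nabla'\theta\,(d\pi v_t, d\pi v_t)$, so the $+\frac{1}{2}$ on that term in the stated formula appears to be a sign slip which the paper's proof, by stopping at ``which implies the formula'' without displaying the final collection of terms, never exposes.
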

 
\proof By Proposition 2.2 we have that 
\[
  \int \theta D^I F(v_t) = \int \theta D F (v_t) - \frac{1}{2} \int \nabla '
\theta (d\pi'F(v_t), DF(v_t)) \\
\] 
and 
\[
 \int (D^v F)^* \theta Dv_t = \int (D^v F)^* \theta D^I v_t + \frac{1}{2} \nabla (D^v F)^* (d \pi v_t, Dv_t).
\]
But, Proposition 2.4 says that:
\[
\int \theta DF (v_t) = \int (D^v F)^* \theta Dv_t + \int (D^hF)^* \theta \ \circ d(\pi v_t).  
\]
Finally, by Proposition 2.5, we have that
\begin{eqnarray*}
\int \nabla'\theta (d \pi 'F (v_t), DF(v_t))&=& \int ( \widetilde{F}_* \otimes
D^v F )^* \nabla'\theta (d \pi v_t, D v _t) +  \\
&& \\
&&  \int ( \widetilde{F}_* \otimes
D^h F )^* \nabla '\theta (d \pi v_t, d \pi v _t),
\end{eqnarray*}
which implies the formula.

\eop

\section{Applications}

\subsubsection*{Commutation Formulae}

Given a differentiable map $(a,b)\in \R^2 \mapsto E $, let $s_E: TTE \rightarrow TTE$ be the symmetry  map given by $s_E (\partial_a \partial_b s(a,b)) = \partial_b \partial_a s(a,b)$. Let $C = \mathcal{K} \,  \mathcal{K}_* - \mathcal{K} \,  \mathcal{K}_* \,  s_E: TTE \rightarrow E$ be the curvature of $\mathcal{K}$. If $u, v \in TM$ and $s \in \Gamma (E)$ then the relation between the curvature of $\mathcal{K}$ with the curvature of the connection $\nabla$ is given by $R^E (u, v)s = C(u v s)$,  see Paterson \cite{Paterson}.

Let  $I \subset\R $ be an open interval and  consider $a\in I \mapsto J(a)$ a differentiable 1-parameter family of semimartingales in $E$. Then 
\begin{eqnarray*}
 \int \theta ~  D \nabla_a J 
& = & \int \theta \K \   ~\circ d  (\nabla_a J) \\
& =& \int \theta ~  \K \   ~\circ d  \K \partial_a J \\
 & =&  \int \theta \K \ \,   \mathcal{K}_{\nabla *}  ~\circ d   \partial_a J \\
 & =& \int \theta ~  \K \ \,   \mathcal{K}_{\nabla *}  ~\circ d   \partial_a J - \int \theta \K \,  \mathcal{K}_{\nabla *} \,  s_E  ~ \circ d  \partial_a  J\\
&& + \int \theta ~  \K \,  \mathcal{K}_{\nabla *} \,  s_E ~  \circ d  \partial_a J\\
& =& \int C   ~\circ d   \partial_a J  + \int \theta \nabla_a DJ.
\end{eqnarray*}

\noindent Compare with Arnaudon and Thalmaier \cite[Equation 4.13]{Arnaudon-Thalmaier}.
An It\^o version, as in \cite{Arnaudon-Thalmaier} can be obtained by convertion formulae.

\subsubsection*{Harmonic sections}

 Let $M$ be a Riemannian manifold and $\pi:V\rightarrow M$ be a Riemannian vector bundle with a connection $\nabla$ which is compatible with its metric. We denote by $E^p$ the vector bundle $\bigwedge^p T^*M\otimes V$ over $M$.  In this context, we shall consider three differential geometric operators. The exterior differential operator  $d:\Gamma(E^p)\rightarrow\Gamma(E^{p+1})$ is defined by
\[
 d\sigma(X_1,\ldots, X_{p+1}):=(-1)^k(\nabla_{X_k}\sigma)(X_0,\ldots,\hat{X}_k,\ldots,X_p).
\]
The co-differential operator  $\delta:\Gamma(E^p)\rightarrow\Gamma(E^{p-1})$ is defined by
\[
 \delta\sigma(X_1,\ldots, X_{p-1}):=-(\nabla_{e_k}\sigma)(e_k,X_1,\ldots,X_{p-1}),
\]
where $\{e_i\}$ is a local orthonormal frame field. And the Hodge-Laplace operator $\Delta : \Gamma(E^p) \rightarrow \Gamma (E^p)$ is given by 
\[
 \Delta =(d\delta+\delta d).
\]
One of the cornerstones of modern geometric analysis is the Weitzenb\"ock formula which states that 
\[
 \Delta \sigma=-\nabla^2\sigma+\Phi(\sigma),
\]
for a  $\Phi\in\textrm{End}(E^p)$, see e.g.  Eells and Lemaire \cite[p.11]{Eells-Lemaire} or Xin \cite[p.21]{xin}.


Let  $B_t$ be a Brownian motion in $M$ and $e_t\in \textrm{End}(E^p)$ be the solution of 
\[
 D^Ie_t=e_t \circ \Phi(B_t)~dt.
\]


\begin{theorem} A section $\sigma \in \Gamma(E^p)$ is harmonic (i.e. $\Delta \sigma=0$) if and only if for any $\theta\in\Gamma(E^{p*})$
\[
 \int\theta~ D^I\sigma_t
\]
is a local martingale, where $\sigma_t= e_t\sigma(B_t)$.

\end{theorem}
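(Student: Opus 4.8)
The plan is to characterize the local-martingale property through the vanishing of the covariant It\^o drift of the process $\sigma_t=e_t\sigma(B_t)$. For a fixed $\theta\in\Gamma(E^{p*})$ the real-valued process $\int\theta\,D^I\sigma_t$ is, by definition, an It\^o integral $\int\theta u_t\,df_t$ against $f_t=u_t^{-1}\sigma_t$, hence a genuine local martingale exactly when the finite-variation part of $f_t$, read off in a parallel frame, vanishes. Since $E^{p*}$ separates the points of $E^p$, asking this for every $\theta$ is equivalent to the vanishing of the full covariant It\^o drift of $\sigma_t$. So the whole statement reduces to computing that drift and matching it, through Weitzenb\"ock, to $\Delta\sigma$.

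First I would write $\sigma_t=e_t\,\sigma(B_t)$ as a covariant It\^o product and expand it by the Leibniz rule for the covariant It\^o differential. This needs three ingredients. (i) A covariant It\^o (second-order) formula for the section evaluated along Brownian motion: in Stratonovich form $D\sigma(B_t)=\nabla_{\circ dB_t}\sigma$, and converting it to It\^o through Proposition 2.2 together with the local quadratic-variation computation for $\langle x^j,f^\gamma\rangle_t$ produces the martingale part $\nabla_{dB_t}\sigma$ and the drift $\frac{1}{2}\,\mathrm{tr}\,\nabla^2\sigma\,dt$, where the trace arises because the quadratic variation of Brownian motion is the metric $g^{-1}$. (ii) The defining equation of $e_t$, which contributes a zeroth-order (potential) drift built from $\Phi$. (iii) The cross quadratic-variation term $d[e_t,\sigma(B_t)]$, which vanishes because $e_t$ carries no covariant martingale part, its covariant It\^o differential being pure drift.

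Collecting these, the covariant It\^o drift of $\sigma_t$ has the form $e_t\bigl(\frac{1}{2}\,\mathrm{tr}\,\nabla^2\sigma+(\mbox{potential term from } e_t)\bigr)\,dt$, and the equation for $e_t$ is calibrated precisely so that the potential term completes the Bochner term into the Weitzenb\"ock combination: using $\Delta\sigma=-\mathrm{tr}\,\nabla^2\sigma+\Phi(\sigma)$ the bracket becomes a fixed multiple of $\Delta\sigma$, so the drift equals a constant times $e_t\,\Delta\sigma\,dt$. Since $e_t$ solves a linear covariant It\^o equation issued from the identity it is invertible for all $t$, and $\theta$ ranges over a separating family; hence the drift vanishes for every $\theta$ if and only if $\Delta\sigma\equiv0$, that is, $\sigma$ is harmonic. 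This yields both implications simultaneously.

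The main obstacle I expect is step (i): establishing the covariant It\^o formula for $\sigma(B_t)$ with the correct Bochner-Laplacian drift, i.e. showing that the second-order It\^o correction coming from the convertion formula, when the quadratic variation of $B_t$ is the metric, assembles exactly into $\frac{1}{2}\,\mathrm{tr}\,\nabla^2\sigma$ and not into some other contraction of $\nabla^2\sigma$. The bookkeeping of the Christoffel terms $\Gamma^{\beta}_{j\gamma}$ and of the frame coefficients $u_t^{\gamma\alpha}$ in the local expression for $\int\nabla\theta\,(dx_t,Dv_t)$ is where this must be verified carefully; once it is in place, the Leibniz expansion and the Weitzenb\"ock substitution are routine, and the invertibility of $e_t$ together with the arbitrariness of $\theta$ closes the argument.
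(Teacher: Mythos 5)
Your proposal is correct and takes essentially the same route as the paper: the paper proves the theorem via a lemma that expands $\int\theta\,D^I\sigma_t$ by the product rule, uses the covariant and classical It\^o--Stratonovich conversion formulas to exhibit a local-martingale part $\int(\theta\circ\nabla\sigma)\,d^{\nabla^M}x_t$ plus a drift $\int(\theta\circ e_t)\bigl(\tfrac{1}{2}\nabla^2+V\sigma g\bigr)\sigma\,(dx_t,dx_t)$, and then concludes by Weitzenb\"ock --- precisely your steps (i)--(iii). The computational obstacle you single out (that the It\^o corrections assemble into the Bochner term) is exactly what the paper resolves, via the identity $\nabla\theta\circ(I\otimes\nabla\sigma)-\nabla^M(\theta\circ\nabla\sigma)=\theta(\nabla^2\sigma)$.
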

\proof The result now is consequence of Weitzenb\"ock formula and the following

\begin{lemma}
 Consider $\sigma\in\Gamma(E^p)$, $\theta\in\Gamma(E^{p*})$ and a semimartingale $x_t$ in $M$.  Given  $V\in\textrm{End}(E^p)$, let $e_t \in\textrm{End}(E^p)$ be the solution of 
\[
 D^Ie_t=e_t \circ V(x_t)~g(d x_t,d x_t).
\]
Write $\sigma_t=e_t \sigma(x_t)$. Then
\[
 \int \theta D^I\sigma_t=\int (\theta\circ\nabla\sigma)~d^{\nabla^M}x_t+\int (\theta\circ e_t)\left(\frac{1}{2}\nabla^2 + V(\sigma (x_t))  g \right)\sigma~(dx_t,dx_t).
\]
\end{lemma}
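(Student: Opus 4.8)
The plan is to reduce the identity to an application of the covariant It\^o formula from Proposition 2.6 together with the definition of $e_t$, and then to evaluate the resulting quadratic-variation correction term explicitly. I first observe that $\sigma_t = e_t\sigma(x_t)$ is a semimartingale in $E^p$ obtained by applying the (time-dependent) fibre-preserving endomorphism $e_t$ to the pulled-back section $\sigma(x_t)$. Because $e_t$ is itself driven by the equation $D^I e_t = e_t\circ V(x_t)\,g(dx_t,dx_t)$, the covariant It\^o differential of $\sigma_t$ splits, by the product (Leibniz) rule for covariant It\^o integration, into a term coming from $d^I$ acting on $\sigma(x_t)$ with $e_t$ held fixed, and a term coming from $d^I e_t$ acting on $\sigma(x_t)$. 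The first of these is exactly the type handled by Proposition 2.6 applied to the fibre-preserving map $F=\sigma:M\to E^p$ viewed appropriately, and its vertical derivative $D^v\sigma$ and horizontal derivative $D^h\sigma$ are expressed in terms of $\nabla\sigma$ and $\nabla^2\sigma$.

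Concretely, I would compute
\[
\int \theta\, D^I\sigma_t = \int \theta\, D^I\bigl(e_t\sigma(x_t)\bigr),
\]
and expand using the covariant It\^o expansion. The pure pull-back part $\int (\theta\circ e_t)\,D^I\sigma(x_t)$ contributes, via the It\^o covariant formula for the section $\sigma$ along $x_t$, the drift-type term $\int(\theta\circ\nabla\sigma)\,d^{\nabla^M}x_t$ together with a second-order piece $\tfrac12\int(\theta\circ e_t)\,\nabla^2\sigma\,(dx_t,dx_t)$; this is where Proposition 2.3 (the It\^o--Stratonovich convertion) and the local-coordinate computation of $\nabla\theta$ and of the cross quadratic variation $\int\nabla\theta\,(dx_t,Dv_t)$ feed in. The contribution from $d^I e_t$ is read off directly from the defining equation for $e_t$: since $D^I e_t = e_t\circ V(x_t)\,g(dx_t,dx_t)$, applying $\theta$ and the section $\sigma$ produces exactly $\int(\theta\circ e_t)\,V(\sigma(x_t))\,g(dx_t,dx_t)$. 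Collecting the two second-order pieces under a single integral sign yields the bracketed operator $\tfrac12\nabla^2 + V$ acting on $\sigma$ evaluated against $(dx_t,dx_t)$.

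The main obstacle I anticipate is the careful bookkeeping of the covariant Leibniz rule for the It\^o integral $D^I(e_t\sigma(x_t))$: unlike the Stratonovich case, the It\^o product rule carries an extra cross quadratic-variation term between $e_t$ and $\sigma(x_t)$, and one must verify that this cross term is either absorbed into the stated quadratic-variation correction or vanishes. I would handle this by working in a local orthonormal frame, writing $e_t$ and $\sigma(x_t)$ in coordinates, and checking that the cross bracket $\langle e_t, \sigma(x_t)\rangle$ contributes only at the order already accounted for by the $g(dx_t,dx_t)$ terms (indeed $e_t$ has finite variation of the relevant type since its driving term is a $dt$-order quadratic variation). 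The remaining steps — identifying $D^v\sigma$ with $\nabla\sigma$ acting vertically, $D^h$ with the horizontal lift, and invoking the prolongation connection $\nabla^h$ of Proposition 2.6 so that the geometric It\^o integral reproduces the covariant one — are then routine substitutions into the formulae established earlier in Section 2, and the Weitzenb\"ock identity $\Delta\sigma = -\nabla^2\sigma + \Phi(\sigma)$ with the choice $V=\Phi$ closes the argument for the theorem by turning the local-martingale condition into the harmonicity condition $\Delta\sigma=0$.
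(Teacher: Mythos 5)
Your proposal is correct and follows essentially the same route as the paper: a Leibniz decomposition of $D^I(e_t\sigma(x_t))$ with vanishing cross bracket (since $e_t$ is driven by a $g(dx_t,dx_t)$, i.e.\ finite-variation, term), substitution of the defining equation for $e_t$ to produce the $V$ term, and conversion of $\int(\theta\circ e_t)\,D^I\sigma(x_t)$ through the Section~2 formulae (covariant It\^o--Stratonovich conversion, the connector identity $\K\sigma_*=\nabla\sigma$, and the classical Stratonovich-to-geometric-It\^o conversion) so that the two correction terms assemble into $\tfrac12\,\theta(\nabla^2\sigma)$. The only differences are cosmetic --- you package the conversion via Proposition 2.6 applied to $\sigma$ as a degenerate fibre-preserving map, while the paper runs Propositions 2.1 and 2.2 directly and closes with the explicit identity $\nabla\theta\circ(I\otimes\nabla\sigma)-\nabla^M(\theta\circ\nabla\sigma)=\theta(\nabla^2\sigma)$; your explicit justification that the cross quadratic variation vanishes is a point the paper leaves implicit.
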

\begin{proof} By covariant It\^o-Stratonovich convertion formula, Equation (\ref{convertion}), we have that
 \begin{eqnarray}
  \int\theta D^I\sigma_t&=&\int\theta~D^Ie_t(\sigma(x_t))+\int(\theta\circ e_t)~D^I(\sigma(x_t))\nonumber\\
&=&\int (\theta\circ e_t)~V(\sigma(x_t))~g(d x_t,d x_t)+\int (\theta\circ e_t)~D^S(\sigma(x_t))\nonumber\\
&&+\frac{1}{2}\int \nabla(\theta\circ e_t)~(d x_t,D\sigma(x_t)).\label{A3}
 \end{eqnarray}
Now, by usual It\^o-Stratonovich convertion formula: 
\begin{eqnarray}
 \int (\theta\circ e_t)~D^S(\sigma(x_t))&=&\int (\theta\circ e_t)\mathcal{K}_\nabla \, \sigma_*~ d x_t\nonumber\\
&=&\int (\theta\circ e_t)\nabla\sigma~ d^{\nabla^M} x_t\nonumber\\
&&-\frac{1}{2}\int\nabla^M(\theta\circ e_t\circ\nabla\sigma)(d x_t, d x_t).\label{A1}
\end{eqnarray}
We have that 
\begin{eqnarray}
 \int \nabla(\theta\circ e_t)~(d x_t,D \sigma (x_t) )=\int \nabla(\theta\circ e_t)\circ(I\otimes\nabla\sigma)~(d x_t,d x_t)\label{A2}
\end{eqnarray}
substituing (\ref{A1}) and (\ref{A2}) in (\ref{A3}) one finds: 
\begin{eqnarray*}
  \int\theta D^I\sigma_t&=& \int (\theta\circ e_t)~V(\sigma(x_t))~g(d x_t,d x_t)+\int (\theta\circ e_t)\nabla\sigma~d^{\nabla^M} x_t\nonumber\\
&&-\frac{1}{2}\int\nabla^M(\theta\circ e_t\circ\nabla\sigma)(d x_t,d x_t) \nonumber\\
&&+\frac{1}{2}\int \nabla(\theta\circ e_t)\circ(I\otimes\nabla\sigma)~(d x_t,d x_t).
\end{eqnarray*}
The result follows using that for all $\theta\in\Gamma(E^*)$,
\[
 \nabla\theta\circ(I\otimes\nabla\sigma)-\nabla^M(\theta\circ\nabla\sigma) =\theta(\nabla^2\sigma).
\]

\eop

\end{proof}

\end{document}